\begin{document}

\def\book#1{\emph{#1}}
\def\term#1{\textbf{#1}}
\def\systemname#1{\textsf{#1}}
\def\macefour{\systemname{mace4}}
\def\spass{\systemname{SPASS}}
\def\paradox{\systemname{paradox}}
\def\equinox{\systemname{equinox}}
\def\plantri{\systemname{plantri}}
\def\sem{\systemname{sem}}
\def\SR{\mathsf{SR}}
\def\SRcard#1#2#3{\SR_{#1,#2,#3}}
\def\SRext{\mathsf{SR}_{\mathrm{ext}}}
\def\Mod#1{\mathrm{Mod}(#1)}

\let\phi=\varphi

\newtheorem{theorem}{Theorem}
\newtheorem{definition}[theorem]{Definition}

\title{Exploring Steinitz-Rademacher polyhedra: a challenge for automated reasoning tools}
\titlerunning{Exploring Steinitz-Rademacher polyhedra}
\author{Jesse Alama\thanks{Partially supported by the ESF
research project \emph{Dialogical Foundations of Semantics} within
the ESF Eurocores program \emph{LogICCC} (funded by the Portuguese
Science Foundation, FCT LogICCC/0001/2007).}\\Center for Artificial Intelligence\\Department of Computer Science, Faculty of Science and Technology\\New University of Lisbon, Portugal\\\url{j.alama@fct.unl.pt}}
\authorrunning{J.~Alama}
\maketitle

\begin{abstract}
  This note reports on some experiments, using a handful of standard
  automated reasoning tools, for exploring Steinitz-Rademacher
  polyhedra, which are models of a certain first-order theory of
  incidence structures.  This theory and its models, even simple ones,
  presents significant, geometrically fascinating challenges for
  automated reasoning tools.
\end{abstract}

\section{Introduction}
\label{sec:introduction}

This note reports on some experiments, using a handful of standard
automated reasoning tools, for exploring a certain first-order theory
of three-dimensional polyhedra.  Polyhedra are understood here as
combinatorial objects, rather than as, say, certain kinds of
structures in $\mathbf{R}^{3}$.

Specifically, the polyhedra considered here are Steinitz-Rademacher
polyhedra (they will be defined in Section~2).  As first-order
structures, these polyhedra are directed graphs with three
sorts---vertices, edges, and faces---satisfying some intuitive
geometric principles shared by ``everyday'' three-dimensional
polyhedra.

Restricting ourselves to first-order logic makes it possible to take
advantage of automated reasoning tools that work well for FOL, but
our restriction comes with a price: many interesting features of
graphs, such as connectivity or the property of satisfying Euler's
formula, for example, cannot be expressed in FOL.\footnote{FOL's
  failure to express these and other properties of graphs holds even
  when one restricts attention to finite structures; such results can
  be shown using Ehrenfeucht-Fraïssé
  games~\cite{libkin2004}.}  
Nonetheless, it is not clear that FOL's lack of expressive power
precludes the possibility of learning something about polyhedra.  We
believe that the preliminary results discussed here do have
interesting mathematical content.

An investigation of polyhedra with automated reasoners is valuable for
two domains.  First, the investigation is valuable for mathematics,
because automated reasoners offer the possibility, in certain
contexts, of a more objective investigation than one carried out by
entirely by humans, who are prone to make subtle flaws when reasoning
about space.  Second, the investigation is valuable for automated
reasoning, because working with polyhedra---even small
ones---naturally leads to challenging problems, as we shall see.


There do not appear to be many explorations of polyhedra using
automated reasoning tools.  Much has been done on enumerating
polyhedra (or related combinatorial structures, such as planar graphs)
using mathematical rather than logical techniques; one such system is
the highly efficient \plantri~\cite{brinkmann-mckay2003}).  Within the
realm of automated reasoning, L.~Schewe has used SAT solvers to
investigate realizability of abstract simplicial
complexes~\cite{schewe-matroid}.


\section{Steinitz-Rademacher polyhedra}

E.~Steinitz asked~\cite{steinitz-rademacher1976}: when can a
combinatorially given polyhedron---a collection of abstract vertices,
edges, and faces, with an incidence relation among these
polytopes---be realized as a three-dimensional convex polyhedron in
$\mathbf{R}^{3}$?\footnote{The answer, known as Steinitz's
  theorem~\cite{grunbaum2007}, says that a directed graph $g$ is
  isomorphic to the $1$-skeleton of a real convex three-dimensional
  polyhedron iff $g$ is planar and three-connected.}  As part of his
initial investigation Steinitz formulated a basic theory of
combinatorial polyhedra that forms the basis for our investigation as
well.

First, we specify an unsorted first-order signature:
\begin{definition}
  Let $\pi$ be the first-order signature (with equality) containing
  three unary predicates $V$ (for ``vertex''), $E$ (for ``edge''), and
  $F$ (for ``face''), and one binary relation $I$ (for incidence).
\end{definition}
The signature $\pi$ provides a rudimentary language for talking about
three-dimensional polyhedra.  Alternatively, one can view
$\pi$-structures simply as directed graphs whose nodes can be painted
with one of three ``colors'' $V$, $E$, and $F$.


\begin{definition} The theory $\SR$ of Steinitz-Rademacher
  polyhedra consists of the statements:
  \begin{multicols}{2}
  \begin{itemize}
    \setlength{\itemsep}{0pt}
    \setlength{\parsep}{0pt}
  \item there are vertices, edges, and faces;
  \item every element is a vertex, edge, or a face;
  \item $I$ is symmetric;
  \item no two vertices are incident, and the same goes for edges and faces;
  \item if $V(v)$, $E(e)$, $F(f)$,
    $I(v,e)$ and $I(e,f)$, then $I(v,f)$;
  \item every edge is incident with exactly two vertices;
  \item every edge is incident with exactly two faces;
  \item $V(v)$, $F(f)$ and $I(v,f)$ imply that there are exactly two
    edges incident with both $v$ and $f$; and
  \item every vertex and every face is incident with at least one
    other element.
  \end{itemize}
  \end{multicols}
\end{definition}
These conditions are expressible as first-order
$\pi$-sentences.
\begin{definition}
  A \term{Steinitz-Rademacher polyhedron} (or \term{SR-polyhedron}) is
  a model of the theory $\SR$.
\end{definition}

Some questions that we would like to address about SR-polyhedra, in
this note, are:
\begin{enumerate}
\setlength{\itemsep}{0pt}
\setlength{\parsep}{0pt}
\item $\SR$ is consistent (just think of, say, a tetrahedron).
  What is the smallest model?\footnote{There is no largest finite
    SR-polyhedron.  Consider, for example, the sequence $\langle P_{n}
    \mid n \geq 3 \rangle$ of pyramids, each $P_{n}$ characterized by
    an $n$-gon $B_{n}$ for its base and a single point
    ``above'' the base incident with each of the vertices of $B_{n}$.
    Each $P_{n}$ is evidently an SR-polyhedron has cardinality
    $(n+1)+2n +(n+1) = 4n+2$.}
\item For which natural numbers $k$ is $\SR$
  $k$-categorical?\footnote{$\SR$ is not $\lambda$-categorical for any
    infinite cardinal $\lambda$.  Consider, for example, the
    ``tessellation'' $M_{\lambda}^{3}$ having $\lambda$ vertices,
    $\lambda$ edges, and $\lambda$ faces, each of which is a triangle
    that meets three other triangles along its three edges, \textit{ad
      $\lambda$-infinitum}; and $M_{\lambda}^{4}$, which is a
    ``tessellation'' like $M_{\lambda}^{3}$ except that each face of
    $M_{\lambda}^{4}$ has four edges rather than three.  In
    $M_{\lambda}^{3}$ the $\pi$-sentence ``every face is a triangle''
    holds, but by construction it fails in $M_{\lambda}^{4}$. Or,
    continuing the discussion of the previous footnote, consider a
    tetrahedron and a cube, each of whose edges and faces contains
    $\lambda$-many vertices (but each having their usual finite number
    of edges and faces).}
\item How hard is it to ``recover'' (that is, produce as models of
  $\SR$) well-known polyhedra (e.g., the platonic solids) as models of
  $\SR$?
\item Can we discover unusual or unexpected SR-polyhedra?
\end{enumerate}
The rest of the paper takes up these questions.

Question~1 is also one of the first questions raised by Steinitz and Rademacher~\cite{steinitz-rademacher1976}.
\begin{theorem}\label{card-6-smallest}
  There is a Steinitz-Rademacher polyhedron of cardinality 6, but none
  of smaller cardinality.
\end{theorem}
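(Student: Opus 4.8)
The plan is to prove the two halves separately: first establish the lower bound that every model of $\SR$ has at least six elements, then exhibit an explicit six-element model.

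For the lower bound, I would extract three independent ``at least two'' facts from the axioms. Since the theory guarantees the existence of at least one edge $e$, and every edge is incident with exactly two (necessarily distinct) vertices and with exactly two distinct faces, we immediately obtain at least two vertices $v_1, v_2$ and at least two faces $f_1, f_2$. The nontrivial point is forcing a second edge. Here I would invoke the incidence-transitivity axiom: since $e$ is incident with some vertex $v$ and some face $f$, we have $I(v,e)$ and $I(e,f)$, whence $I(v,f)$, so there is at least one incident vertex--face pair. The axiom ``$V(v), F(f)$ and $I(v,f)$ imply there are exactly two edges incident with both $v$ and $f$'' then yields two distinct edges. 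Counting $2+2+2$ gives the bound $6$.

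For the upper bound, I would build the degenerate ``dihedron'' (a bigon, i.e.\ a sphere split into two two-sided faces): take vertices $v_1, v_2$, edges $e_1, e_2$, and faces $f_1, f_2$, declare each edge incident with both vertices and both faces, close $I$ under symmetry, and add the four vertex--face incidences $I(v_i, f_j)$ forced by transitivity. Verifying the axioms is then routine: each edge meets exactly two vertices and exactly two faces by construction; each of the four incident vertex--face pairs $(v_i, f_j)$ is met by exactly the two edges $e_1, e_2$; incidence is symmetric and respects the restrictions against same-sort incidences; and every vertex and every face is incident with an edge. This structure is a model of $\SR$ of cardinality $6$.

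The main obstacle is the ``at least two edges'' step of the lower bound, since, unlike the vertex and face counts, it cannot be read off a single axiom but requires chaining the transitivity axiom to manufacture an incident vertex--face pair before the two-edges axiom can be applied. One should also record explicitly that ``exactly two'' is read as ``exactly two distinct'', which is what legitimizes the simultaneous lower bound of $2$ for each of the three sorts; with that convention the construction and the bound meet at $6$.
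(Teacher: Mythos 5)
Your argument is correct in substance, but it takes a genuinely different route from the paper: the paper's proof is entirely computational, delegating both halves to off-the-shelf tools (a model finder such as \macefour{} or \paradox{} to produce the six-element model, and a refutational prover to show that $\SR$ plus ``there are at most five elements'' is inconsistent). You instead give a hand proof, and your decomposition of the lower bound is the right one: the existence of an edge plus the two ``exactly two'' axioms yields two vertices and two faces for free, and the only nontrivial step is manufacturing a second edge by chaining the $V$--$E$--$F$ transitivity axiom to produce an incident vertex--face pair before applying the two-edges axiom. Your witness model is exactly the paper's $M_{6}$ (the ``dihedron'' of Figure~1). What your approach buys is mathematical insight and human checkability---it explains \emph{why} $6$ is the threshold, which the paper's tool run does not; what the paper's approach buys is immunity to formalization slips, which matters here because your count $2+2+2=6$ silently assumes that the two vertices, two edges, and two faces are six pairwise distinct elements, i.e., that $V$, $E$, $F$ are pairwise disjoint---something the axiom list never asserts outright. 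This can be repaired (e.g., an element that is both a vertex and an edge would, via the two-vertices-per-edge axiom, force two distinct incident vertices, contradicting ``no two vertices are incident''; a vertex--face coincidence similarly forces a reflexive incidence), but you should record that lemma explicitly, alongside your correct observation that ``exactly two'' must be read as ``exactly two distinct.''
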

\begin{proof}
  This result is readily confirmed with the help of a first-order
  model generation tool (e.g., \macefour{}~\cite{mace4}
  or \paradox{}/\equinox{}~\cite{ClaessenS03}).  A refutational
  theorem prover can then show that $\SR$, extended with an axiom
  saying that there are at most five elements, is inconsistent.
\end{proof}
Figure 1 illustrates this smallest SR-polyhedron, $M_{6}$, with two
vertices, two edges, and two faces; the two edges are the upper and
lower semicircular arcs, and the two faces are the inside and outside
of the circle.
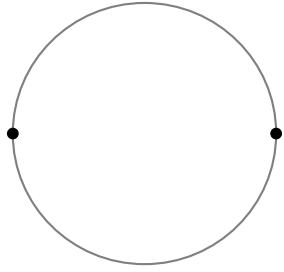
\begin{figure}[h]
  \label{card-6-sr-poly}
  \centering
  \begin{tikzpicture}
    \draw [gray,thick] (0,0) circle (4.5em);
    \filldraw[black] (-1*4.5em,0) circle (2pt);
    \filldraw[black] (1*4.5em,0) circle (2pt);
  \end{tikzpicture}
  \caption{$M_{6}$: the smallest SR-polyhedron, with six elements: two vertices, two edges, two faces}
\end{figure}
$M_{6}$ has the curious feature that every vertex is incident with
every edge and with every face.  Depending on one's view about
polyhedra, $M_{6}$ might be a positive solution to Question~4; see
Section~\ref{future-work}.

\begin{theorem}
  Up to isomorphism, $M_{6}$ is the only SR-polyhedron of cardinality
  six.
\end{theorem}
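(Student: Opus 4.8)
The plan is to show that every six-element model $M$ of $\SR$ must have exactly two vertices, two edges, and two faces, and that once this is known the incidence relation is completely forced, so that $M$ is isomorphic to $M_{6}$. A preliminary step is to observe that the sort predicates $V$, $E$, $F$ are pairwise disjoint. This is not stated outright but follows from the axioms: an element that were both a vertex and an edge would, \emph{qua} edge, be incident with two distinct vertices, at least one of which (being distinct from it) violates ``no two vertices are incident''; the edge/face overlap is excluded the same way, and the vertex/face overlap by using the $v$--$e$--$f$ transitivity axiom to push a face's edge-incidences down to the edge's two vertices. Together with the covering axiom this justifies writing $6 = |M| = a + b + c$, where $a := |V^{M}|$, $b := |E^{M}|$, $c := |F^{M}|$, so that I may argue about the three summands separately.

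Next come the three lower bounds $a \ge 2$, $b \ge 2$, $c \ge 2$, which are the heart of the argument. The bounds $a \ge 2$ and $c \ge 2$ are immediate, since each edge is incident with two \emph{distinct} vertices and two \emph{distinct} faces. The bound $b \ge 2$ is the one that needs work, and I would obtain it from the auxiliary claim that \emph{every vertex is incident with at least two edges}. Indeed, a vertex $v$ is incident with some other element by the last axiom, and that element is not a vertex; if it is an edge, the two faces of that edge are incident with $v$ by transitivity, and the ``exactly two edges'' axiom applied to $v$ and either such face yields two edges at $v$; if it is a face, that axiom applies directly. Counting incident vertex--edge pairs in two ways then gives $2b \ge 2a \ge 4$, whence $b \ge 2$. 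With $a, b, c \ge 2$ and $a + b + c = 6$, all three are forced to equal $2$.

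Finally, with $a = b = c = 2$ the structure becomes rigid. Each edge, needing two distinct vertices out of only two available, is incident with both vertices, and likewise with both faces; consequently each vertex meets both edges and each face meets both edges; and for any vertex $v$ and face $f$, choosing either edge $e$ and applying transitivity to $I(v,e)$ and $I(e,f)$ yields $I(v,f)$, so every vertex--face pair is incident. Reading ``no two vertices are incident'' as $\forall x\,y\,(V(x)\wedge V(y)\rightarrow\neg I(x,y))$ (and similarly for $E$, $F$) makes $I$ irreflexive on each sort, hence irreflexive everywhere by disjointness; thus $I(x,y)$ holds exactly when $x$ and $y$ lie in different sorts. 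Any two six-element models of $\SR$ therefore agree under any sort-preserving bijection, so each is isomorphic to $M_{6}$.

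I expect the main obstacle to be the bound $b \ge 2$, that is, ruling out the superficially admissible splits $(2,1,3)$ and $(3,1,2)$ with a single edge: this is the only place where one must combine the ``at least one other element'', transitivity, and ``exactly two edges'' axioms rather than read a bound off a single clause. A secondary point requiring care is the treatment of self-incidence in the final step, since the theorem is only true if the formalization makes $I$ irreflexive; the disjointness bookkeeping, while routine, must also be carried out to license summing the three cardinalities to $6$.
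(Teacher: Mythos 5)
Your proof is correct, but it takes a genuinely different route from the paper. The paper's proof is mechanical: it splits the problem into the $28$ theories $\SRcard{N_{0}}{N_{1}}{N_{2}}$ with $N_{0}+N_{1}+N_{2}=6$, has a refutational prover kill the $27$ cases other than $(2,2,2)$, and then disposes of $(2,2,2)$ by asking \macefour{} to refute $\SR$ together with a sentence $\phi$ asserting that some one of the eight cross-sort incidences of $M_{6}$ fails. Your argument replaces the $27$ refutations with the three lower bounds $a,b,c\geq 2$, of which only $b\geq 2$ is nontrivial; your lemma that every vertex lies on at least two edges (derived from the ``at least one other element'', transitivity, and ``exactly two edges'' axioms) together with the double count $2b=\sum_{v}\deg_{E}(v)\geq 2a$ is exactly the missing ingredient, and your rigidity argument for the $(2,2,2)$ case plays the role of the \macefour{} refutation of $\SR\cup\{\phi\}$. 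What your approach buys is a human-checkable proof and some insight into \emph{why} the split must be $(2,2,2)$; what the paper's approach buys is independence from exactly these kinds of delicate readings of the informal axioms, since the prover works from the actual first-order formalization, and of course it fits the paper's stated purpose of benchmarking automated tools. Your caveat about irreflexivity is well taken but applies equally to the paper: the formula $\phi$ only forces the eight cross-sort incidences, so the paper too is implicitly reading ``no two vertices are incident'' (and its analogues) without a distinctness hypothesis, which is what rules out self-loops and same-sort incidences and lets a sort-preserving bijection be an isomorphism. The only point worth tightening in your write-up is the vertex/face disjointness step: you should say explicitly that the ``other element'' incident with a putative vertex-face $x$ must be an edge (the vertex and face cases being immediately contradictory), before pushing that edge's two vertices onto $x$ via transitivity.
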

\begin{proof}
  Each of the 28 triples of natural numbers $(N_{0},N_{1},N_{2})$ that
  sum to $6$ gives rise to an extension $\SRcard{N_{0}}{N_{1}}{N_{2}}$
  of $\SR$ obtained by adding axioms saying that there are exactly
  $N_{0}$ vertices, $N_{1}$ edges, and $N_{2}$ faces.  All but one of
  these 28 theories---namely, $\SRcard{2}{2}{2}$---are inconsistent;
  this can be shown by applying a standard refutational theorem prover
  to the 27 theories different from $\SRcard{2}{2}{2}$.

  To show that $M_{6}$ is, up to isomorphism, the only SR-polyhedron
  with exactly $2$ vertices, $2$ edges, and $2$ faces, consider the
  extension of $\SR$ by the formula
  \[
  \phi \coloneqq \exists x_{0},\dots,x_{5}
  \left [
    \begin{array}{c}
      V(x_{0}) \wedge V(x_{1}) \wedge x_{0} \neq x_{1} \wedge \forall x \left ( V(x) \rightarrow \left ( x = x_{0} \vee x =x_{1} \right ) \right )\\
      \wedge\\
       E(x_{2}) \wedge E(x_{3}) \wedge x_{2} \neq x_{3} \wedge \forall x \left ( E(x) \rightarrow \left ( x = x_{2} \vee x =x_{3} \right ) \right )\\
      \wedge\\
      F(x_{4}) \wedge F(x_{5}) \wedge x_{4} \neq x_{5} \wedge \forall x \left ( F(x) \rightarrow \left ( x = x_{4} \vee x =x_{5} \right ) \right )\\
      \wedge\\
      \neg \left ( I(x_{0},x_{2}) \wedge I(x_{1},x_{2}) \wedge I(x_{0},x_{3}) \wedge I(x_{1},x_{3}) \wedge I(x_{2},x_{4}) \wedge I(x_{3},x_{4}) \wedge I(x_{2},x_{5}) \wedge I(x_{3},x_{5}) \right )
    \end{array}
  \right ]
  \]
  The first three bundles of conjunctions in the matrix of $\phi$
  express the cardinality of the sets of vertices, edges, and faces.
  The final conjunction expresses the essential incidence relations
  holding among the elements of $M_{6}$, when one labels the vertices
  $x_{0}$ and $x_{1}$, the edges $x_{2}$ and $x_{3}$, and the faces
  $x_{4}$ and $x_{5}$; it is negated because we are trying to find a
  model that is \emph{unlike} $M_{6}$.  One then shows, with, e.g.,
  \macefour{}, that $\SR \cup \{ \phi \}$ is unsatisfiable.
\end{proof}
\begin{theorem}
  There are at least two SR-polyhedron of cardinality~8.
\end{theorem}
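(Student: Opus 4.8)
The plan is to exhibit two SR-polyhedra on eight elements that cannot be isomorphic. The simplest way to guarantee non-isomorphism is to give the two models different \emph{profiles} $(N_0,N_1,N_2)$ of vertex-, edge-, and face-counts, since any isomorphism of $\pi$-structures must respect the unary predicates $V$, $E$, and $F$; two models with distinct triples $(N_0,N_1,N_2)$ are therefore automatically non-isomorphic. It thus suffices to produce one consistent model whose profile sums to $8$ and a second model with a different such profile.

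For the first model I would generalize $M_6$. Picture a sphere carrying a north pole and a south pole joined by three meridian arcs; take the two poles as vertices, the three arcs as edges, and the three ``lunes'' cut out between consecutive arcs as faces. This has profile $(2,3,3)$ and total cardinality $8$. The axioms are checked directly: each meridian meets exactly the two poles and exactly the two lunes flanking it; each pole is incident with every lune; and for a pole $v$ and a lune $f$ the edges incident with both are precisely the two meridians bounding $f$, so the ``exactly two edges'' clause holds. Hence this structure is a model of $\SRcard{2}{3}{3}$, and so of $\SR$.

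For the second model I would invoke duality. Each axiom of $\SR$ is invariant under the involution that swaps $V$ and $F$ while fixing $E$ and $I$: the two ``every edge is incident with exactly two $\ldots$'' clauses are interchanged, while the remaining axioms---the implication from $I(v,e) \wedge I(e,f)$ to $I(v,f)$, the requirement of exactly two edges between an incident vertex--face pair, and the nondegeneracy clauses---are, using the symmetry of $I$, already symmetric in $V$ and $F$. Consequently the dual of any SR-polyhedron is again an SR-polyhedron. Applying this to the lune model produces a model of profile $(3,3,2)$: concretely a triangle drawn on the sphere, with three vertices, three edges, and the inside and outside of the triangle as its two faces. Since its vertex count differs from that of the lune model, the two are non-isomorphic, and the theorem follows.

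The only real labor is confirming that the lune model satisfies every axiom, and especially the clause demanding exactly two edges for each incident vertex--face pair; this is immediate from the picture but, in keeping with the earlier results, can instead be verified by asking \macefour{} for models of $\SRcard{2}{3}{3}$ and of $\SRcard{3}{3}{2}$, which also bypasses the duality argument entirely. I anticipate no genuine obstacle: the substance of the theorem is the existence of these two small configurations, and the self-duality of $\SR$ makes the second essentially free once the first is established.
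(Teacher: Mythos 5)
Your proposal is correct and matches the paper's own argument: your ``lune'' model (two poles, three meridians, three lunes) is combinatorially identical to the paper's $M_{8}$ (a circle with a diameter: two vertices, three edges, three faces), and the second model is obtained in both cases by taking the dual, using the observation that $\SR$ is invariant under exchanging $V$ and $F$. Your explicit remark that the two models are non-isomorphic because their vertex counts differ is a small but welcome addition that the paper leaves implicit.
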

One such cardinality 8 SR-polyhedron is depicted in
Figure~\ref{card-8-sr-poly}.  The two vertices are clear; the three
edges are the upper and lower semicircles, plus the straight line
segment joining the two vertices.  The three faces are: the exterior
of the circle, incident with the upper and lower semicircular arcs;
and the two semicircles, each incident with one of the semicircular
arcs and both incident with the straight line segment.  Another is its
dual, $M_{8}^{d}$, which is obtained by exchanging the vertices and
faces of $M_{8}$ (notice that $\SR$ is invariant under this exchange).
\begin{figure}[h]
  \label{card-8-sr-poly}
  \centering
  \begin{tikzpicture}
    \draw [gray,thick] (0,0) circle (4.5em);
    \draw [gray,thick] (-1*4.5em,0) -- (1*4.5em,0);
    \filldraw[black] (-1*4.5em,0) circle (2pt);
    \filldraw[black] (1*4.5em,0) circle (2pt);
  \end{tikzpicture}
  \caption{An SR-polyhedron with eight elements: two vertices, three edges, three faces.}
\end{figure}
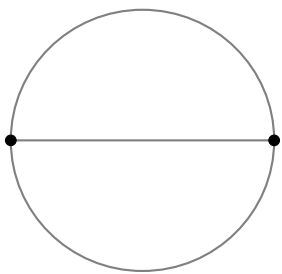

Returning to Question~4, as with, $M_{6}$, the SR-polyhedron $M_{8}$
and its dual $M_{8}^{d}$ could be regarded as an unusual or unexpected
model of $\SR$.  Although $\SR$ does admit curious ``polyhedra'', it
should be clear that one can take nearly any familiar polyhedron, such
as the platonic solids, as SR-polyhedra.

\begin{table}[h]
  \centering
  \begin{tabular}{l|r|r|r|r}
    \bf{Solid} & \bf{Num. Vertices} & \bf{Num. Edges} & \bf{Num. Faces} & \bf{Cardinality}\\
    \hline
    Tetrahedron    & 4        & 6     & 4     & 14\\
    Cube           & 8        & 12    & 6     & 26\\
    Octahedron     & 6        & 12    & 8     & 26\\
    Dodecahedron   & 20       & 30    & 12    & 62\\
    Icosahedron    & 12       & 30    & 20    & 62\\
  \end{tabular}
  \caption{Cardinal numbers for the platonic solids}
\end{table}
Notice that, since they are duals, the cube and the octahedron, as
well as the dodecahedron and the icosahedron, show that $\SR$ is
neither $26$- nor $62$-categorical.  We now have the modest beginnings
of an answer to Question~2: for $k = 6$ we have that $\SR$ is
$k$-categorical, and for $k =8$, $26$, and $62$ we know that $\SR$ is
not $k$-categorical, by duality.

Although the tetrahedron can be recovered as an SR-polyhedron (by,
e.g., \paradox{}), the remaining platonic solids seem to lie
tantalizingly beyond the scope of current automated reasoning tools: a
very large amount of time indeed seems to be required to automatically
generate these solids. It would be interesting to determine whether
the cube and its dual the icosahedron are the only SR-polyhedra of
cardinality 26.

For lack of space the investigation has to be cut short here.

\section{Future work}\label{future-work}

The range of Steinitz-Rademacher polyhedra is arguably too large: if
the tetrahedron is the ``simplest'' three-dimensional polyhedron, then
the six-element SR-polyhedron $M_{6}$ in Figure~\ref{card-6-sr-poly}
and other SR-polyhedra of cardinality less than that of the
tetrahedron, such as $M_{8}$ in Figure~\ref{card-8-sr-poly}, show that
$\SR$ alone lacks sufficient geometric content and needs to be
extended by principles that rule out such models.  Extensionality is a
natural candidate.  $M_{6}$, for example, has two distinct vertices
that share the same edges and faces, two distinct edges that share the
same two vertices and faces, and two distinct faces that share the
same vertices and edges.  The eight-element model $M_{8}$ is similar.
Intuitively, the incidence relation between polytopes is extensional:
if two polytopes $p$ and $q$ are incident with the
same set of polytopes, then $p = q$.

Adding extensionality to $\SR$ yields a new theory $\SRext$ of extensional
SR-polyhedra.
\paradox{} can recover the tetrahedron as the smallest model of
$\SRext$, thereby answering Question~1 and part of Question~3 for the
new theory.  However, extensionality raises a high computational
hurdle.  Concerning Questions~2 and~3, the search for models of
$\SRext$ has so far not been successful beyond the aforementioned
recovery of the tetrahedron.  Question~4, about $k$-categoricity, is
an even greater challenge for $\SRext$ than it was for $\SR$ and is
more intriguing because its models are more geometrically
intuitive.

The signature $\pi$ of SR-polyhedra is unsorted: $V$, $E$, $F$, and
$I$ are relations that could hold for arbitrary elements in a
$\pi$-structure.  It is likely that tools for sorted FOL, such as
\spass{}~\cite{spass}, would be more effective than the unsorted tools
used so far.  Constraint techniques for model generation, such as
those behind \sem{}~\cite{zhang-zhang1995}, ought also to be
evaluated.

\section{Conclusion}

Combinatorial polyhedra abstract away from positions in space and
regard polyhedra as incidence structures.  Steinitz-Rademacher
polyhedra are one such kind of polyhedra axiomatized
by an intuitive first-order theory.  We have proposed a handful of basic
questions about these polyhedra and shown that automated reasoners can
tackle some of them with verve, though others remain only partially
answered.  Even small Steinitz-Rademacher polyhedra present
significant challenges for automated reasoners; dealing with larger
ones will likely require new techniques.  We thus urge combinatorial
polyhedra as a tantalizingly fertile source of challenging automated
reasoning problems.

\bibliographystyle{plain}
\bibliography{alama}
\end{document}